\documentclass[12pt,letterpaper]{amsart}
\usepackage[makeindex]{imakeidx}
\usepackage{ amsmath,amsthm, amsbsy,amsfonts,amssymb, txfonts}
\usepackage[normalem]{ulem}
\setcounter{page}{1}
\usepackage{stmaryrd,mathrsfs,graphicx, supertabular, amscd, tikz-cd, tikz}
\tikzcdset{row sep/my size/.initial=6mm}
\tikzcdset{column sep/my size/.initial=6mm}
\usepackage{stackrel}
\usetikzlibrary{matrix,arrows,decorations.pathmorphing}

\usepackage{ulem}

\usepackage[active]{srcltx}
\allowdisplaybreaks
\numberwithin{equation}{section}

\usepackage[
	hypertexnames=false,
	hyperindex,
	pagebackref,
	breaklinks=true,
	bookmarks=false,
	colorlinks,
	linkcolor=blue,
	citecolor=red,
	urlcolor=blue,
]{hyperref}
\usepackage{hyperref}

\def\QQ{{\mathbb Q}}

\def\ZZ{{\mathbb Z}}

\def\g{\gamma}

\def\Bcal{{\mathcal B}}
\def\Ccal{{\mathcal C}}

\def\Ical{{\mathcal I}}

\def\Pcal{{\mathcal P}}

\def\mfrak{{\mathfrak{m}}}

\def\vfrak{\mathfrak{v}}
\def\wfrak{\mathfrak{w}}
\def\ufrak{\mathfrak{u}}

\def\Ccalk{{\Ccal_k}}

\newcommand\gr{\operatorname{gr}}

\newcommand\Tcal{\operatorname{{\mathcal T}}}

\newcommand\Sp{\operatorname{Sp}}

\DeclareMathOperator{\id}{id}
\def\f{\phi}

\newtheorem{atheorem}{Theorem}

\newtheorem{theorem}{Theorem}[section]
\newtheorem{lemma}[theorem]{Lemma}
\newtheorem{proposition}[theorem]{Proposition}

\newtheorem{definition}[theorem]{Definition}

\theoremstyle{remark}
\newtheorem{example}[theorem]{Example}

\newtheorem{remark}[theorem]{Remark}


\title[$\Gamma_{g,1}$-invariants of Passi representations]{The Mapping class group invariants of the truncated group ring}
\author{Andreas Stavrou}
\address{Mathematics Department, University of Chicago}
\email{andreasstavrou@uchicago.edu}


\begin{document}
\begin{abstract} We compute the invariant subspace of the rational group ring of a surface, truncated by powers of the augmentation ideal, under the action of the mapping class group. The surface is compact, oriented with one boundary component. This provides the first group cohomology computation for the mapping class group with non-symplectic coefficients since Kawazumi--Souli\'e. Our computation is valid in a range growing with the genus.
\end{abstract}

\maketitle


\section{Introduction}
Let $\Sigma_{g,1}$ be a compact oriented genus $g$ surface with one boundary component, and  $\Gamma_{g,1}$ denote its mapping class group. There is a natural action of $\Gamma_{g,1}$ on the fundamental group $\pi=\pi_1(\Sigma_{g,1},p)$ (relative to a point on the boundary) which extends to the group ring $\QQ\pi$. This action preserves the filtration of $\QQ\pi$ by powers of the augmentation ideal $\Ical$, the kernel of the augmentation homomorphism $\varepsilon: \QQ\pi\to \QQ$, sending every $\g\in \pi$ to $1$. In this paper we focus on the $\Gamma_{g,1}$-representations $\Pcal_k=\QQ\pi/\Ical^{k+1},$ sometimes referred to as the \textit{Passi representations}, and our main result determines their $\Gamma_{g,1}$-invariants $[\Pcal_k]^{\Gamma_{g,1}}$. 

The representations $\Pcal_k$ play a key role in the study of configuration spaces, such as in the work of Moriyama, Looijenga, and the author \cite{Moriyama,looijenga2024motivic,LooijengaStavrou25}, interpolating in complexity between the full group ring $\QQ\pi$ and the first homology $H=H_1(\Sigma_{g,1};\QQ)$, on which $\Gamma_{g,1}$ acts through the symplectic group $\Sp_{2g}(\ZZ)$. As the invariants $[\Pcal_k]^{\Gamma_{g,1}}$ coincide with the group cohomology $H^0(\Gamma_{g,1};\Pcal_k)$, our main result is the first cohomological computation of $\Gamma_{g,1}$ with non-symplectic coefficients since Kawazumi-Souli\'e \cite{KawazumiSoulie24}. We will pursue analogous higher degree computation in joint work with the last author \cite{SoulieStavrou2025}.

What do we expect to find in $[\Pcal_k]^{\Gamma_{g,1}}$? On the one hand, the boundary loop $\zeta\in \pi$ is fixed by $\Gamma_{g,1}$, so polynomials in $\zeta$ and $\zeta^{-1}$ give $\Gamma_{g,1}$-invariant elements of $\QQ\pi$. (In Theorem \ref{thm:invariantsoffullring}, we show this is all of $[\QQ\pi]^{\Gamma_{g,1}}$).  On the other hand, $\Pcal_k$ contains the submodule $\Ical^k/\Ical^{k+1}$ which, by a classical result of Fox \cite{Fox}, is naturally isomorphic to $H^{\otimes k}$. The $\Gamma_{g,1}$-invariants of the latter are the classically known $\Sp_{2g}(\ZZ)$-invariants $[H^{\otimes k}]^{\Sp_{2g}(\ZZ)}$ (described in Section \ref{sec:chorddiagrams}), and also lie in $[\Pcal_k]^{\Gamma_{g,1}}$. Our main result shows that, in a stable range, these account for all of $[\Pcal_k]^{\Gamma_{g,1}}$.

\begin{atheorem}\label{thm:main}
    If $k+1\le 2g$, then the $\Gamma_{g,1}$-invariant part of $\Pcal_k$ is the direct sum $$\ZZ\langle (\zeta-1)^i+\Ical^{k+1}:2i< k\rangle\oplus [H^{\otimes k}]^{\Sp_{2g}(\ZZ)}.$$
\end{atheorem}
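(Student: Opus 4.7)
I will argue by induction on $k$ using the $\Ical$-adic filtration. The base $k=0$ is trivial since $\Pcal_0=\QQ$. For the inductive step, the main tool is the short exact sequence of $\Gamma_{g,1}$-modules
\[
0\longrightarrow \Ical^{k}/\Ical^{k+1}\longrightarrow \Pcal_{k}\longrightarrow \Pcal_{k-1}\longrightarrow 0,
\]
together with Fox's identification $\Ical^k/\Ical^{k+1}\cong H^{\otimes k}$. Taking $\Gamma_{g,1}$-invariants, and using that the $\Gamma_{g,1}$-action on $H^{\otimes k}$ factors through $\Sp_{2g}(\ZZ)$, produces the left-exact sequence
\[
0\to [H^{\otimes k}]^{\Sp_{2g}(\ZZ)}\to [\Pcal_k]^{\Gamma_{g,1}} \to [\Pcal_{k-1}]^{\Gamma_{g,1}} \stackrel{\delta}{\to} H^{1}(\Gamma_{g,1};H^{\otimes k}).
\]
The goal reduces to identifying $\ker\delta$ under the inductive description of $[\Pcal_{k-1}]^{\Gamma_{g,1}}$.

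Before analyzing $\delta$, I would record the ``$\supseteq$'' inclusion. Since $\zeta$ is fixed by $\Gamma_{g,1}$ and $\zeta-1\in\Ical^2$ (as $\zeta$ is a product of commutators), each $(\zeta-1)^i$ gives a $\Gamma_{g,1}$-invariant class in $\Ical^{2i}/\Ical^{k+1}\subset\Pcal_k$. Its leading term in the associated graded is, up to a sign, the $i$-th tensor power of the symplectic form $\omega\in H^{\otimes 2}$; this shows both that these classes are linearly independent and that, for $2i<k$, they are transverse to the bottom summand $[H^{\otimes k}]^{\Sp_{2g}(\ZZ)}=[\Ical^{k}/\Ical^{k+1}]^{\Gamma_{g,1}}$. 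Together with the classical symplectic-invariant theory of $H^{\otimes k}$ (chord diagrams, described in Section~\ref{sec:chorddiagrams}), this gives the claimed direct sum inside $[\Pcal_k]^{\Gamma_{g,1}}$.

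For the reverse inclusion, I would feed the inductive hypothesis for $k-1$ into the exact sequence above and compute $\delta$ on each summand. The lifts of $(\zeta-1)^i$ with $2i<k-1$ are automatically in $\ker\delta$, since they lift to the literal elements $(\zeta-1)^i\in\Pcal_k$. For the remaining generators, which in the odd-$k$ case include the chord-diagram invariants in $[H^{\otimes k-1}]^{\Sp_{2g}(\ZZ)}$, the connecting map $\delta$ should be expressed via a Johnson-type cocycle: pick a set-theoretic splitting of $\Pcal_k\to\Pcal_{k-1}$, apply a mapping class, and read off the $H^{\otimes k}$-valued cocycle. The plan is then to show that $\delta$ kills precisely the ``power-of-$\omega$'' direction spanned by $\omega^{(k-1)/2}$ (which is matched by the new generator $(\zeta-1)^{(k-1)/2}$) and is injective on a complementary subspace of chord-diagram invariants. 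The hypothesis $k+1\le 2g$ guarantees that we are simultaneously in the stable ranges where the symplectic invariants of $H^{\otimes j}$ have their expected description and where the relevant piece of $H^1(\Gamma_{g,1};H^{\otimes k})$ is known from the work of Kawazumi and Morita.

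\textbf{Main obstacle.} The hard step is computing the connecting homomorphism $\delta$ on the ``middle-level'' symplectic invariants, in particular separating the unique class that lifts (the power of $\omega$, already accounted for by $(\zeta-1)^{(k-1)/2}$) from the other chord diagrams which must be shown \emph{not} to lift. Handling this would involve representing $\delta$ by an explicit Johnson-type cocycle and pairing it with known generators of $H^{1}(\Gamma_{g,1};H^{\otimes k})$ such as the Morita--Miller--Mumford style crossed homomorphisms, and then invoking a non-vanishing/non-degeneracy statement that ensures only the chord diagram corresponding to the symplectic form is annihilated.
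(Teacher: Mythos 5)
Your inductive framework is sound and matches the paper's in outline: the base case, the use of the filtration short exact sequence, the observation that the $(\zeta-1)^i$ lift for free, and the reduction (in the case where the new graded piece has no symplectic invariants, i.e.\ $k$ odd) to showing that no chord-diagram invariant of $[H^{\otimes k-1}]^{\Sp_{2g}(\ZZ)}$ other than a multiple of $\omega^{\otimes (k-1)/2}$ admits an invariant lift. The problem is that this last statement \emph{is} the entire content of the theorem, and your proposal does not prove it: you defer it to ``representing $\delta$ by a Johnson-type cocycle, pairing with known generators of $H^1(\Gamma_{g,1};H^{\otimes k})$, and invoking a non-vanishing/non-degeneracy statement.'' No such statement is established, and it is not available off the shelf: you would need to know $H^1(\Gamma_{g,1};H^{\otimes k})$ together with the precise image under $\delta$ of every chord-diagram class, and to show that $\delta$ is injective on a complement of $\omega^{\otimes(k-1)/2}$ --- which is exactly the Main Lemma of the paper (Lemma \ref{lem:main}), not a citation. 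So the proposal is an outline whose hard step is acknowledged but left open; as written it is a genuine gap.

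It may help to see how the paper closes this gap without any $H^1$ computation, which also shows your cohomological route is heavier than necessary. Since the Torelli group acts trivially on the graded pieces, the failure of invariance of a lift under a Torelli element $f$ is measured by an honest homomorphism (the higher Johnson map $\delta_f$, which by Proposition \ref{prop:deltaextendsasderivation} is the derivation extending Johnson's $\tau(f)$); if an invariant lift existed, $\delta_f$ would literally annihilate the leading term --- no coboundary ambiguity, hence no need to know $H^1(\Gamma_{g,1};H^{\otimes k})$ or to test against twisted MMM classes. The paper then takes a single explicit Torelli element $\f$ (a genus-one bounding pair twist, with $\tau(\f)$ computed by Johnson) and shows by a direct combinatorial analysis of monomials (Propositions \ref{prop:nocancellations}--\ref{thm:LI}) that $\delta_\f(\omega_{C_0})=0$ while the $\delta_\f(\omega_C)$, $C\neq C_0$, are linearly independent; this is where the hypothesis $k+1\le 2g$ is used, both for the chord diagrams to be a basis and to separate the monomials. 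Until you supply an argument of comparable force for your ``non-degeneracy'' claim, the inductive step for odd $k$ is unproven.
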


In other words, passing from $k$ to $k+1$ via the surjections $\Pcal_{k+1}\twoheadrightarrow \Pcal_k$, gives maps $[\Pcal_{k+1}]^{\Gamma_{g,1}}\twoheadrightarrow [\Pcal_k]^{\Gamma_{g,1}}$ that surject on the $\zeta$-part but do not interact with the $[H^{\otimes k}]^{\Sp_{2g}(\ZZ)}$-parts. This is in no contradiction with the fact that the invariant functor is only left-exact and so only preserves injections.

While the condition $k+1\le 2g$ may be an artefact of the proof, it aligns with the best known stable ranges of group cohomology with polynomial coefficients of degree $k$ proved by Boldsen \cite{Boldsen}. In Remark \ref{rem:unstablerange}, we suggest a method to enlarge this range.

\subsection{Outline}
Our proof will be an induction using the recursive nature of the $\Pcal_k$. In Section \ref{sec:reductiontomainlemma}, we reduce the theorem to computing the $\Gamma_{g,1}$-invariants of the quotient $\Ical^{k}/\Ical^{k+2}$, which is an extension of $H^{\otimes k}$ by $H^{\otimes k+1}$ related to the Johnson-Morita crossed homomorphism from \cite{Morita93CrossedHom}. These invariants are computed in Section \ref{sec:proofofmainlemma} via a combinatorial analysis of monomials in a symplectic basis of $H$ that appear in the ``chord-diagram'' description of $[H^{\otimes k}]^{\Sp_{2g}(\ZZ)}]$.

\subsection{Acknowledgments} I would like to thank Arthur Souli\'e and Louis Hainaut for reading earlier drafts and the inspiring conversations.

\section{Reduction to the main lemma}\label{sec:reductiontomainlemma}
\subsection{Invariant theory and chord diagrams}\label{sec:chorddiagrams}
Let us fix a symplectic basis $a_{1}, b_1 \ldots, a_{g},b_g$ for $H$ so that the intersection pairing evaluates as $\langle a_i, b_{j}\rangle=\delta_{i,j}$ for $1\le i,j\le g$, and $\langle a_i,a_j\rangle=\langle b_i,b_j\rangle=0$.
Then the pairing $\langle -,-\rangle$ produces by self-duality an invariant element $\omega\in H^{\otimes 2}$ given in this basis by  $$\omega=\sum_{i=1}^g a_i\otimes b_i-b_i\otimes a_i,$$
or, in shorthand, by $\sum_{i=1}^g a_i\wedge b_i$, where we interpret $x\wedge y=x\otimes y-y\otimes x$.

All invariant elements of $H^{\otimes 2l}$ for $l\ge 0$ are generated by $\omega$ as follows.
A \textit{chord diagrams of size $2l$} is an ordered partition of the set $\{1,\ldots, 2l\}$ into $k$ pairs $((p_1,q_1),...,(p_l,q_l))$ such that $p_i<q_i$ for all $i=1,...,l$ and $p_1<p_2<\cdots<p_l$. Let $\Ccal_{2l}$ be the set of all chord diagrams. For each $C\in \Ccal_{2l}$, there is an associated invariant
$$\omega_C=\otimes_{i=1}^l\omega_{p_i,q_i}\in H^{\otimes 2l},$$
by inserting $\omega$ in the tensor slots $(p_i,q_i)$ for each $i$. For example, the \textit{trivial} chord diagram $C_0=((1,2),(3,4),\ldots, (2l-1,2l))$ corresponds to the tensor power $\omega^{\otimes l}$. We thus have a map $\Omega_{2l}:\QQ\Ccal_l\to [H^{\otimes 2l}]^{\Sp_{2g}(\ZZ)}$. The following can be deduced from Section 11.6.3 of \cite{Procesi}. 
\begin{theorem}\label{thm:chorddiagramsinvariants}
    For $k$ odd, $[H^{\otimes k}]^{\Sp_{2g}(\ZZ)}$ is trivial. For  $k$ even, the map $\Omega_k$ is surjective, and is, furthermore, an isomorphism if $k\le 2g$.
\end{theorem}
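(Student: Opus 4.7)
My plan is to break the statement into three parts --- the odd case, surjectivity in the even case, and injectivity in the even case --- after a preliminary density reduction.

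First I would pass from $\Sp_{2g}(\ZZ)$-invariants to algebraic invariants. Since $H^{\otimes k}$ arises as the restriction of an algebraic representation of the algebraic group $\Sp_{2g}$ over $\QQ$, and since $\Sp_{2g}(\ZZ)$ is Zariski dense in $\Sp_{2g}(\QQ)$, one has $[H^{\otimes k}]^{\Sp_{2g}(\ZZ)} = [H^{\otimes k}]^{\Sp_{2g}(\QQ)}$. For $k$ odd, the central element $-I \in \Sp_{2g}(\ZZ)$ acts on $H^{\otimes k}$ by the scalar $(-1)^k = -1$, so any invariant vector must vanish, settling the odd case.

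For the even case $k = 2l$, surjectivity of $\Omega_{2l}$ is the content of the First Fundamental Theorem of classical invariant theory for the symplectic group, which is exactly what Procesi 11.6.3 provides. Concretely, the symplectic form identifies $H \cong H^*$ and sends $\omega$ to the pairing $\langle -,- \rangle$. Under this identification, an element of $[H^{\otimes 2l}]^{\Sp_{2g}}$ corresponds to an $\Sp_{2g}$-invariant multilinear form on $H^{\otimes 2l}$, and the FFT asserts that such forms are spanned by products of symplectic pairings. Each such product organises the $2l$ slots into $l$ disjoint pairs, which is precisely the data of a chord diagram $C \in \Ccal_{2l}$, and the corresponding invariant is $\omega_C$.

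For injectivity when $k = 2l \le 2g$, I would argue combinatorially in the symplectic basis. Suppose $\sum_{C} \lambda_C \omega_C = 0$. For each chord diagram $C = ((p_1,q_1),\ldots,(p_l,q_l))$, construct a marker monomial $m_C \in H^{\otimes 2l}$ by placing $a_i$ at slot $p_i$ and $b_i$ at slot $q_i$ for $i = 1, \ldots, l$; this is possible precisely because $l \le g$ allows $l$ distinct indices. Expanding $\omega_{C'}$ in the symplectic basis, the monomial $m_C$ appears only when each pair $(p_i, q_i)$ of $C$ is also a pair of $C'$ --- otherwise $a_i$ and $b_i$ (at positions forming one pair of $C$) would come from two different tensor factors of $\omega_{C'}$, which is impossible since a single $\omega$ always produces $a_j$ and $b_j$ with the same index $j$. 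This forces $C' = C$, and in $\omega_C$ the coefficient of $m_C$ is $+1$. Hence $\lambda_C = 0$ for every $C$.

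The main obstacle in this program is the appeal to the FFT. The density reduction, the odd case, and the combinatorial injectivity argument are all essentially elementary; extracting the multilinear form of the FFT from the more standard polynomial version (via polarization and restitution) is the substantive classical input, which is why the author invokes Procesi directly rather than reproducing the proof.
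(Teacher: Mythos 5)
Your proposal is correct and takes essentially the same route as the paper, which offers no independent argument but simply deduces the statement from the symplectic First Fundamental Theorem in Section 11.6.3 of Procesi --- the same classical input you invoke for surjectivity. The extra details you supply (Zariski density of $\Sp_{2g}(\ZZ)$, the action of $-I$ for odd $k$, and the marker-monomial argument showing the $\omega_C$ are linearly independent when $l\le g$) are sound and simply make explicit what the paper subsumes in the citation.
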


\subsection{On the associated graded}
By a theorem of Fox \cite{Fox}, the augmentation filtration $\QQ\pi\supset \Ical\supset \Ical^2\supset\cdots$ has associated graded ring $\gr^\Ical_*\QQ\pi=\oplus_{k\ge 0}\Ical^{k}/\Ical^{k+1}$ isomorphic to the free tensor algebra $T[H]$ on $H$. The correspondence is given by 
\begin{equation*}\label{eq:correspondenceassgraded}
    (\g_1-1)\cdots(\g_k-1)+\Ical^{k+1}\in \Ical^k/\Ical^{k+1}\longleftrightarrow [\g_1]\otimes \cdots \otimes [\g_k]\in H^{\otimes k},
\end{equation*}
where $\g_1,\ldots, \g_k\in \pi$, and $[\g]\in H$ denotes the abelianisation of $\g\in\pi$. 
\begin{example}\label{ex:zetaISomega}
    The element $\zeta$ is in the commutator subgroup of $\pi$ and so $\zeta-1\in \Ical^2$. It is a standard computation that $\zeta-1+\Ical^3\longleftrightarrow \omega\in H^{\otimes 2}$. It follows, more generally, that each $(\zeta-1)^i+\Ical^{2i+1}$ corresponds to $\omega^{\otimes i}\in H^{\otimes 2i}$.
\end{example}

It follows that the longer quotient $\Ical^k/\Ical^{k+2}$ sits in a short exact sequence
\begin{equation}\label{eq:2stepSES}
    0\to H^{\otimes k+1}\to \Ical^k/\Ical^{k+2}\to H^{\otimes k}\to 0
\end{equation}
of $\Gamma_{g,1}$-representations, for each $k\ge 0$, where the extremal terms are symplectic. The left exactness of the invariants functor gives the exact sequence
\begin{equation*}\label{eq:2stepSESinvariants}
    0\to [H^{\otimes k+1}]^{\Sp_{2g}(\ZZ)}\to [\Ical^k/\Ical^{k+2}]^{\Gamma_{g,1}}\to [H^{\otimes k}]^{\Sp_{2g}(\ZZ)},
\end{equation*}
which in the case $k$ is even, say $k=2l$, reduces by Theorem \ref{thm:chorddiagramsinvariants} to an injection
\begin{equation}\label{eq:invariantinjection}
    [\Ical^{2l}/\Ical^{2l+2}]^{\Gamma_{g,1}}\hookrightarrow [H^{\otimes 2l}]^{\Sp_{2g}(\ZZ)}.
\end{equation}
\begin{lemma}[Main lemma]\label{lem:main}
    If $2l+1\le 2g$, the image of \eqref{eq:invariantinjection} is spanned by $\omega^{\otimes l}$ and its domain by $(\zeta-1)^l+\Ical^{2l+2}$.
\end{lemma}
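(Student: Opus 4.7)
The injection \eqref{eq:invariantinjection} combined with the chord-diagram basis of Theorem \ref{thm:chorddiagramsinvariants} (available since $2l+1\le 2g$ implies $2l\le 2g$) lets me write any $v \in [\Ical^{2l}/\Ical^{2l+2}]^{\Gamma_{g,1}}$ as a lift of some element $w = \sum_{C\in\Ccal_{2l}} \lambda_C \omega_C$. The lemma reduces to showing $\lambda_C = 0$ for every $C \neq C_0$. Once this is established, both $v$ and $\lambda_{C_0}\cdot(\zeta-1)^l + \Ical^{2l+2}$ (the latter being $\Gamma_{g,1}$-invariant by Example \ref{ex:zetaISomega}) are $\Gamma_{g,1}$-invariant lifts of the same element $\lambda_{C_0}\omega^{\otimes l}$; their difference lies in $[H^{\otimes 2l+1}]^{\Gamma_{g,1}} = [H^{\otimes 2l+1}]^{\Sp_{2g}(\ZZ)}$, which vanishes by Theorem \ref{thm:chorddiagramsinvariants} (odd tensor power), so $v = \lambda_{C_0}(\zeta-1)^l + \Ical^{2l+2}$ as claimed.

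To force $\lambda_C = 0$ for $C \neq C_0$, I would exploit the action of the Torelli group $\ker(\Gamma_{g,1} \to \Sp_{2g}(\ZZ))$, which acts trivially on both ends of \eqref{eq:2stepSES}. For any Torelli element $\sigma$, the displacement $(\sigma-1)\cdot v \in H^{\otimes 2l+1}$ is therefore independent of the chosen lift of $w$, producing a linear map $\hat{\tau}(\sigma) \colon [H^{\otimes 2l}]^{\Sp_{2g}(\ZZ)} \to H^{\otimes 2l+1}$ that must annihilate $w$. A direct calculation using Fox's correspondence recalled in Section \ref{sec:reductiontomainlemma} gives the derivation formula
\[
\hat{\tau}(\sigma)(x_1 \otimes \cdots \otimes x_{2l}) = \sum_{i=1}^{2l} x_1 \otimes \cdots \otimes \tau_1(\sigma)(x_i) \otimes \cdots \otimes x_{2l},
\]
where $\tau_1(\sigma) \colon H \to H^{\otimes 2}$ is the first Johnson homomorphism (whose values lie in $\Lambda^2 H \subset H^{\otimes 2}$) and the substitution expands position $i$ into two tensor positions. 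Johnson's classical theorem identifies the image of $\tau_1$, as $\sigma$ varies, with $\Lambda^3 H$ under the natural embedding $\Lambda^3 H \hookrightarrow H \otimes \Lambda^2 H \hookrightarrow \Hom(H, H^{\otimes 2})$.

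The combinatorial core is then to show that the only $w = \sum_C \lambda_C \omega_C$ annihilated by the derivation $\hat{\phi}$ for every $\phi \in \Lambda^3 H$ is a scalar multiple of $\omega_{C_0} = \omega^{\otimes l}$. I would probe with test elements such as $\phi = a_r \wedge a_s \wedge a_t$ for distinct indices $r,s,t$ (available because $2g \ge 2l+1 \ge 3$), expand each $\omega_C$ as a signed sum of tensor monomials in the symplectic basis $\{a_i, b_j\}$, and extract from $\hat{\phi}(\omega_C)$ specific witness monomials in $H^{\otimes 2l+1}$ whose coefficients separate the chord diagrams. The main obstacle is precisely this combinatorial separation: for each $C \neq C_0$ one must exhibit a monomial whose coefficient in $\hat{\phi}(\omega_C)$ cannot be cancelled by contributions from any other chord diagrams, and verify that the resulting linear system has kernel exactly $\QQ\,\omega^{\otimes l}$. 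The consistency check $\hat{\phi}(\omega^{\otimes l}) = 0$ for all $\phi$ is tautological, since $(\zeta-1)^l$ already provides a $\Gamma_{g,1}$-invariant lift of $\omega^{\otimes l}$.
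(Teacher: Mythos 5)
Your setup is exactly the paper's: reduce via the chord-diagram basis to showing that a sum $\sum_C\lambda_C\omega_C$ killed by the Torelli "displacement" derivations must be a multiple of $\omega^{\otimes l}$, with the derivation formula you state being precisely Proposition \ref{prop:deltaextendsasderivation}, and your closing argument for the domain statement (difference of invariant lifts lies in the vanishing odd invariants) matching how the paper exploits injectivity of \eqref{eq:invariantinjection}. So there is no divergence of strategy to report — the issue is that you stop before the part that carries all the weight.

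The genuine gap is the combinatorial separation step, which you explicitly flag as "the main obstacle" but do not carry out: you never exhibit, for each $C\neq C_0$, a witness monomial in $\hat{\phi}(\omega_C)$ that survives cancellation both internally (among the many monomial summands of the same $\omega_C$ that the derivation touches) and externally (against contributions from every other $\omega_{C'}$), nor do you verify that the kernel of the resulting linear system is exactly $\QQ\,\omega^{\otimes l}$. This is not a routine verification; it is the entire content of the paper's Section \ref{sec:proofofmainlemma}, where a single explicit bounding-pair twist $\f$ (with $\delta_\f(a_1)=b_2\wedge a_1$, $\delta_\f(b_1)=b_2\wedge b_1$, $\delta_\f(a_2)=a_1\wedge b_1$) is used, monomials are sorted into types $X_1,X_2,Y$, and Lemma \ref{lem:Tfm} together with Propositions \ref{prop:nocancellations} and \ref{prop:nontriviality} establishes that $\{\delta_\f(\omega_C):C\neq C_0\}$ is linearly independent while $\delta_\f(\omega_{C_0})=0$; the hypothesis $2l+1\le 2g$ is needed there so that a type-$X$ monomial determines its chord diagram. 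Your proposed probes $\phi=a_r\wedge a_s\wedge a_t$ are a reasonable alternative family (and your appeal to Johnson's surjectivity is more than the paper needs — one explicit element suffices), but such a derivation annihilates all $a$-variables and rewrites three $b$-variables, so the no-cancellation analysis it requires is no simpler and is entirely unproven in your write-up. As it stands, the proposal is a correct reduction plus an unverified claim where the actual proof lives.
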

The proof of the above lemma is left as the content of Section \ref{sec:proofofmainlemma}. We now use the lemma to prove Theorem \ref{thm:main}.

\begin{proof}[Proof of Theorem \ref{thm:main}]
    We induct on $k$, the case of $\Pcal_0=\ZZ$ being clear. For $k\ge 1$, the quotient $\Pcal_{k+1}$ is an extension of $\Pcal_{k}$ by $H^{\otimes k+1}$ in a way comparable with equation \ref{eq:2stepSES} via the natural inclusions
    \begin{equation}
        \begin{tikzcd}
            0\rar & H^{\otimes k+1}\dar[equal]\rar& \Ical^k/\Ical^{k+2}\rar\dar[hook] & H^{\otimes k}\dar[hook]\rar & 0\\
            0\rar & H^{\otimes k+1}\rar& \Pcal_{k+1}\rar & \Pcal_k\rar & 0.
        \end{tikzcd}
    \end{equation}
    We take $\Gamma_{g,1}$-invariants and distinguish two cases by the parity of $k$. 
    
    If $k$ is odd, then the bottom row gives the exact sequence
    \begin{equation}\label{eq:P_k+1invariantSES}
    \begin{tikzcd}
        0\rar & {[H^{\otimes k+1}]^{\Sp_{2g}(\ZZ)}} \rar& {[\Pcal_{k+1}]^{\Gamma_{g,1}}}\rar & {[\Pcal_k]^{\Gamma_{g,1}}},
    \end{tikzcd}
    \end{equation}
    From induction, we know that the rightmost term is spanned by $(\zeta-1)^i+\Ical^{k+1}$ for $2i<k$, or equivalently since $k$ is odd, for $2i<k+1$; all these elements lift in $\Pcal_{k+1}$ to $(\zeta-1)^i+\Ical^{k+2}$, making the righmost map of \eqref{eq:P_k+1invariantSES} surjective, and the sequence \eqref{eq:P_k+1invariantSES} into an exact sequence of free abelian groups. It is then split, giving the desired result $[\Pcal_{k+1}]^{\Gamma_{g,1}}\cong [\Pcal_k]^{\Gamma_{g,1}}\oplus [H^{\otimes k+1}]^{\Sp_{2g}(\ZZ)}$.

    On the other hand, if $k$ is even, then the $[H^{\otimes k+1}]^{\Sp_{2g}(\ZZ)}$ vanishes giving us a square of inclusions
    \begin{equation}
            \begin{tikzcd}
             {[\Ical^k/\Ical^{k+2}]^{\Gamma_{g,1}}} \rar[hook]\dar[hook] & {[H^{\otimes k}]^{\Sp_{2g}(\ZZ)}} \dar[hook]\\ {[\Pcal_{k+1}]^{\Gamma_{g,1}}}\rar[hook] & {[\Pcal_k]^{\Gamma_{g,1}}},
        \end{tikzcd}
    \end{equation}
    and we need only determine the image of the bottom map. By induction $[\Pcal_k]^{\Gamma_{g,1}}=\ZZ\langle (\zeta-1)^i+\Ical^{k+1}:2i<k\rangle \oplus {[H^{\otimes k}]^{\Sp_{2g}(\ZZ)}}$, and the bottom maps hits all the $(\zeta-1)^i+\Ical^{k+1}$ for $2i<k$; it also hits $(\zeta-1)^{\frac{k}2}+\Ical^{k+1}=\omega^{\otimes \frac{k}2}$. The rest of $[\Pcal_k]^{\Gamma_{g,1}}$ lies in $\Ical^k$ and so in ${[\Pcal_{k+1}]^{\Gamma_{g,1}}}$, it comes from ${[\Ical^k/\Ical^{k+2}]^{\Gamma_{g,1}}}$. By the key Lemma \ref{lem:main}, we get nothing more,  thus finishing the induction.
\end{proof}

\subsection{The untruncated group ring}
For comparison the $\Pcal_k$, we now compute the invariants of the full group ring. We are able to do this over $\ZZ$. First, we require a lemma on the action of $\Gamma_{g,1}$ on $\pi$ which we could not locate in the literature; we prove it using the language of Farb--Margalit \cite{FarbMarg}.
\begin{lemma}\label{lem:finiteorbit}
    If $g\ge 1$, then every element $\g\in \pi$ that is not a power of $\zeta$ has infinite $\Gamma_{g,1}$-orbit.
\end{lemma}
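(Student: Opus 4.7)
The plan is to exploit the action of the boundary Dehn twist $T_\partial\in\Gamma_{g,1}$---the Dehn twist along a simple closed curve isotopic to $\partial\Sigma_{g,1}$---to force $\gamma$ to centralize a nontrivial power of $\zeta$. Working in a collar model $A\cong \partial\Sigma_{g,1}\times [0,1]$ that supports the twist, any loop $\gamma$ based at $p\in\partial$ picks up a full wrap around $\partial$ on its initial segment in $A$ and a compensating wrap on its final segment, yielding $T_\partial(\gamma)=\zeta^{\epsilon}\gamma\zeta^{-\epsilon}$ for a fixed $\epsilon\in\{\pm 1\}$. Equivalently, under the Dehn--Nielsen--Baer isomorphism $\Gamma_{g,1}\cong \{\phi\in \Aut(\pi):\phi(\zeta)=\zeta\}$, $T_\partial$ corresponds to the inner automorphism $\mathrm{inn}_\zeta\colon x\mapsto \zeta x\zeta^{-1}$ (or its inverse), which indeed fixes $\zeta$ and is central in $\Aut(\pi)_\zeta$ by the computation $\phi\circ \mathrm{inn}_\zeta\circ \phi^{-1}=\mathrm{inn}_{\phi(\zeta)}=\mathrm{inn}_\zeta$.

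Applying the finiteness hypothesis, the subset $\{T_\partial^n(\gamma):n\in\ZZ\}=\{\zeta^{\epsilon n}\gamma\zeta^{-\epsilon n}\}$ of the $\Gamma_{g,1}$-orbit is finite, so some nonzero power $T_\partial^m$ fixes $\gamma$; equivalently, $\gamma$ centralizes $\zeta^m$ for some $m\ne 0$. To conclude, I would invoke the primitivity of $\zeta$ in the free group $\pi$: by Example~\ref{ex:zetaISomega}, the image of $\zeta$ in $\pi_2/\pi_3\cong \wedge^2 H$ is $\omega=\sum_i a_i\wedge b_i$, which is primitive in $\wedge^2 H_\ZZ$, so no factorisation $\zeta=w^k$ in $\pi$ with $k\ge 2$ can hold (it would force $\omega=k\cdot[w]$ with $[w]\in \wedge^2 H_\ZZ$, contradicting primitivity). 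Since $\zeta$ is primitive, the centralizer $C_\pi(\zeta^m)=\langle\zeta\rangle$ for every $m\ne 0$, so $\gamma\in\langle\zeta\rangle$, as desired.

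The main obstacle is the explicit identification of $T_\partial$'s action on $\pi$. The collar-neighborhood computation is standard in the Farb--Margalit framework \cite{FarbMarg}, but care is required to verify that the initial and final segment contributions come with opposite powers of $\zeta$; otherwise the homomorphism property $T_\partial(\alpha\beta)=T_\partial(\alpha)T_\partial(\beta)$ would fail and $T_\partial$ would not be a bona fide automorphism of $\pi$. Alternatively one can argue without pinning down $\epsilon$ by using only that $T_\partial$ is a nontrivial central element of $\Aut(\pi)_\zeta$ and hence acts as $\mathrm{inn}_{\zeta^a}$ for some $a\ne 0$, which suffices for the pigeonhole step above.
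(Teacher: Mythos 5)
Your proposal is correct, but it takes a genuinely different route from the paper's. The paper argues geometrically: it puts a hyperbolic metric on the punctured surface, takes geodesic representatives, observes that any $\g$ which is not a power of $\zeta$ must intersect one of the symplectic basis curves essentially, and then uses that iterating the Dehn twist along that curve strictly increases the intersection number, so already that single twist gives an infinite orbit. You instead argue algebraically through the boundary twist: $T_{\partial}$ acts on $\pi$ by conjugation by $\zeta^{\pm 1}$ (your collar computation is sound --- each arc crossing the collar picks up the boundary loop at the crossing, and for a based loop the two crossings contribute $c(\cdot)c^{-1}$, so the homomorphism property is automatic; equivalently this is point-pushing along $\partial$), hence a finite orbit forces $\g$ to centralize some $\zeta^{m}$ with $m\neq 0$; since centralizers of nontrivial elements of a free group are infinite cyclic, generated by the root, and since $\zeta$ is not a proper power (its image $\omega$ in $\gamma_2(\pi)/\gamma_3(\pi)\cong \wedge^2 H_1(\Sigma_{g,1};\ZZ)$ is primitive), one gets $C_\pi(\zeta^m)=\langle\zeta\rangle$ and $\g$ is a power of $\zeta$. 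Two small points to tighten: in the no-proper-power step, first note that $\zeta=w^k$ forces $[w]=0$ in integral $H_1$ (which is torsion free), so that $w$ lies in the commutator subgroup and has a class in $\wedge^2 H_1(\Sigma_{g,1};\ZZ)$ to which your divisibility argument applies; and your fallback claim that any nontrivial central element of $\Aut(\pi)_\zeta$ must be conjugation by a power of $\zeta$ is not justified as stated, so you should rely on the direct collar computation (which is standard) rather than on centrality. As for what each approach buys: the paper's argument is short given the hyperbolic machinery and exhibits the geometric mechanism (intersection numbers grow under twisting along a basis curve), while yours avoids geometry entirely, uses only the single central element $T_\partial$ --- so it proves the slightly stronger statement that the $\langle T_\partial\rangle$-orbit is already infinite --- and rests on classical free-group facts together with the same computation $\zeta\mapsto\omega$ already recorded in Example \ref{ex:zetaISomega}.
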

\begin{proof}
    The interior of $\Sigma_{g,1}$ can be viewed as the once punctured surface $\Sigma_{g,*}$  and can be given a hyperbolic metric with the puncture $*$ forming a cusp. Under this metric, pick a representative $\widetilde{\g}$ of $\g$ as a geodesic tending to the cusp. Pick also hyperbolic representatives for the symplectic basis $a_1,b_1,\ldots, a_g,b_g$. As the complement of these $2g$ geodesics deformation retracts to an open collar of the boundary/puncture, then $\g$ must intersect one of these curves, say $\delta$, for otherwise, $\g$ would be a power of $\zeta$. Since both $\widetilde{\g}$ and $\delta$ are geodesics, this intersection is essential, and so repeated applications on $\g$ of the Dehn twist $T_\delta$ along $\delta$ only increase this intersection number. In particular,  $T_\delta^n(\g)\neq \g$ for all $n\ge 0$ giving the infinite orbit. 
\end{proof}

\begin{theorem}\label{thm:invariantsoffullring}
    If $g\ge 1$, then the invariant subring $[\ZZ\pi]^{\Gamma_{g,1}}$ is generated by $\zeta$ and $\zeta^{-1}$.
\end{theorem}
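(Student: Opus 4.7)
The plan is to exploit the fact that $\pi$ is a $\ZZ$-basis of $\ZZ\pi$ on which $\Gamma_{g,1}$ acts by a permutation representation, so that $\Gamma_{g,1}$-invariance of an element of $\ZZ\pi$ translates, upon comparison of basis coefficients, into the condition that its coefficient function be constant on $\Gamma_{g,1}$-orbits in $\pi$.

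First, I would note the easy inclusion: since every mapping class fixes the boundary pointwise, $\zeta \in \pi$ is fixed by $\Gamma_{g,1}$. Hence every power $\zeta^n$ ($n \in \ZZ$) is fixed, so the subring $\ZZ[\zeta,\zeta^{-1}] \subseteq \ZZ\pi$ is contained in $[\ZZ\pi]^{\Gamma_{g,1}}$.

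For the reverse inclusion, let $x = \sum_{\g \in \pi} n_\g \g \in \ZZ\pi$ be a $\Gamma_{g,1}$-invariant element; in particular the support $\supp(x) = \{\g \in \pi : n_\g \neq 0\}$ is finite. For any $\phi \in \Gamma_{g,1}$, comparing the coefficient of each basis element in $\phi \cdot x = x$ gives $n_{\phi(\g)} = n_\g$ for every $\g \in \pi$, so $n$ is constant on $\Gamma_{g,1}$-orbits. In particular, the support $\supp(x)$ is a union of $\Gamma_{g,1}$-orbits, and since it is finite, each of its orbits is finite. By Lemma \ref{lem:finiteorbit}, every element of $\supp(x)$ is therefore a power of $\zeta$, so $x \in \ZZ[\zeta, \zeta^{-1}]$, completing the proof.

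The only real content is Lemma \ref{lem:finiteorbit}, which is already in hand; once that is available the argument above is purely formal (permutation representations have invariants generated by sums over finite orbits, and here the only finite orbits are the singletons $\{\zeta^n\}$). Consequently there is no substantial obstacle beyond what the preceding lemma settles.
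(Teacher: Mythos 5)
Your proposal is correct and follows essentially the same argument as the paper: invariance forces the (finite) support to be a union of $\Gamma_{g,1}$-orbits, which must therefore be finite, and Lemma \ref{lem:finiteorbit} then forces every element of the support to be a power of $\zeta$. The extra remarks about the coefficient function and the easy inclusion $\ZZ[\zeta,\zeta^{-1}]\subseteq[\ZZ\pi]^{\Gamma_{g,1}}$ are fine but add nothing beyond the paper's proof.
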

\begin{proof}
    Assume that $\sum_{i=1}^n \alpha_i\gamma_i\in \ZZ\pi,$
    where $\alpha_i\in \ZZ$ and $\gamma_i\in \pi$, is $\Gamma_{g,1}$-invariant. Then for each $\gamma_i$, its $\Gamma_{g,1}$-orbit must be a subset of $\{\gamma_1,...,\gamma_n\}$ and thus finite. By Lemma \ref{lem:finiteorbit}, then each $\gamma_i=\zeta^{k_i}$ for some $k_i\in \ZZ$.
\end{proof}
\begin{remark}\label{rem:zetamoduloIk+1}
While Theorem \ref{thm:invariantsoffullring} is essentially the inverse limit of Theorem \ref{thm:main} for $k\to \infty$, the latter could not have deduced the former because it is restricted in a stable range. We note that after truncating by powers of $\Ical$ we needed not consider the generator $\zeta^{-1}$ additionally to $\zeta$ in $\Pcal_k$ because of the identity
    \begin{equation*}
        \zeta^{-1}\equiv \sum_{i=0}^{n} (-1)^{n}(1-\zeta)^i \pmod{ I^{n+1}}.
    \end{equation*}
\end{remark}

\section{Proof of the main lemma}\label{sec:proofofmainlemma}
\subsection{Relation to Johnson} The extension \eqref{eq:2stepSES} in the key lemma is related to the Johnson homomorphism in a way we now describe. The Torelli group $\Tcal_{g,1}$ is the subgroup of $\Gamma_{g,1}$ acting trivially on $H$ and as such any element $f\in \Tcal_{g,1}$ acts trivially on the extremal terms of \eqref{eq:2stepSES}. In the special case $k=1$, the latter is the extension $H^{\otimes 2}\to \Ical/\Ical^3\to H$. Then for any $x\in H$, picking a lift $\tilde{x}\in \Ical/\Ical^3$, the quantity $\delta_f(x)=f*\tilde{x}-x$, where $f*$ denotes the action of $f$, lies in the injective image of $H^{\otimes 2}$, giving us a linear map $\delta_f:H\to H^{\otimes 2}$. The assignment $\tau:f\mapsto \delta_f$ is (the rationalisation of) the Johnson homomorphism $\Tcal_{g,1}\to \hom(H,H^{\otimes 2})$ and is a group homomorphism. 
For general $k$, a similar argument for \eqref{eq:2stepSES}, gives a group homomorphism
$$\tau^{k}:\Tcal_{g,1}\to\hom(H^{\otimes k},H^{\otimes k+1}), f\mapsto \delta^k_f.$$ 
\begin{proposition}\label{prop:deltaextendsasderivation} For any $f\in \Tcal_{g,1}$, we have
    $\delta^k_f=\sum_{i=1}^k\id_H^{\otimes i-1}\otimes \delta_f\otimes \id_H^{\otimes k-i}$. In other words, the linear map $\oplus_{k\ge 0}\delta_f^k:T[H]\to T[H]$ is a derivation of degree $1$ (but with no Koszul sign). 
\end{proposition}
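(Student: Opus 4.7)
The plan is to evaluate $\delta^k_f$ on a decomposable class $x=[\g_1]\otimes\cdots\otimes[\g_k]\in H^{\otimes k}$, using the Fox correspondence to choose a canonical lift, and then expand the $f$-action into a Leibniz-style sum by isolating the contribution of each factor separately.

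First I would choose the lift $\tilde{x}=(\g_1-1)\cdots(\g_k-1)+\Ical^{k+2}\in\Ical^k/\Ical^{k+2}$, which maps to $x$ under the Fox identification recalled in Section~\ref{sec:chorddiagrams}. The key input is that $f\in\Tcal_{g,1}$ acts trivially on $H=\Ical/\Ical^2$, so for each $i$ the element $f(\g_i)\g_i^{-1}$ lies in the commutator subgroup of $\pi$; equivalently, $r_i:=f(\g_i)-\g_i\in\Ical^2$. Writing $f(\g_i)-1=(\g_i-1)+r_i$, I would then expand
\begin{equation*}
f*\tilde{x}-\tilde{x}=\prod_{i=1}^{k}\bigl((\g_i-1)+r_i\bigr)-\prod_{i=1}^{k}(\g_i-1)
\end{equation*}
as a sum over nonempty $S\subseteq\{1,\ldots,k\}$ of monomials obtained by selecting $r_i$ for $i\in S$ and $(\g_i-1)$ for $i\notin S$. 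Any such monomial lies in $\Ical^{(k-|S|)+2|S|}=\Ical^{k+|S|}$, so modulo $\Ical^{k+2}$ only the terms with $|S|=1$ survive.

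Next I would identify the surviving terms with the claimed derivation. For $|S|=\{i\}$, the monomial is $(\g_1-1)\cdots(\g_{i-1}-1)\,r_i\,(\g_{i+1}-1)\cdots(\g_k-1)$, which under the Fox isomorphism $\Ical^{k+1}/\Ical^{k+2}\cong H^{\otimes k+1}$ corresponds to $[\g_1]\otimes\cdots\otimes[\g_{i-1}]\otimes \overline{r_i}\otimes[\g_{i+1}]\otimes\cdots\otimes[\g_k]$, where $\overline{r_i}$ is the class of $r_i$ in $\Ical^2/\Ical^3\cong H^{\otimes 2}$. But $\overline{r_i}=f(\g_i)-\g_i\pmod{\Ical^3}$ is by definition $\delta_f([\g_i])$, applied to the lift $\g_i-1$ of $[\g_i]$. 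Summing over $i$ yields
\begin{equation*}
\delta^k_f(x)=\sum_{i=1}^{k}[\g_1]\otimes\cdots\otimes\delta_f([\g_i])\otimes\cdots\otimes[\g_k],
\end{equation*}
which is exactly $\sum_i \id_H^{\otimes i-1}\otimes \delta_f\otimes\id_H^{\otimes k-i}$ evaluated on $x$. Since decomposable classes span $H^{\otimes k}$, the formula holds on all of $H^{\otimes k}$, and the degree-$1$ derivation statement follows immediately.

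The only real obstacle is the combinatorial bookkeeping in the expansion, together with verifying that the filtration degrees add correctly so that exactly the $|S|=1$ terms survive; this is where it is essential that $r_i\in\Ical^2$, which in turn is precisely the Torelli hypothesis on $f$. Everything else is a direct application of Fox's theorem and the definition of $\delta_f$.
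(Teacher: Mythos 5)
Your proposal is correct and follows essentially the same route as the paper: lift a decomposable tensor to $(\g_1-1)\cdots(\g_k-1)$, use that the Torelli condition forces each difference $f(\g_i)-\g_i$ into $\Ical^2$, expand the product, and note that modulo $\Ical^{k+2}$ only the terms with a single substituted factor survive, each contributing $\id^{\otimes i-1}\otimes\delta_f\otimes\id^{\otimes k-i}$. The filtration bookkeeping you highlight is exactly the point the paper's proof also relies on.
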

\begin{proof}
    Let us write for any $y\in \Ical$, $d_f(y)=y-f*(y)$, so that $d_f(y)\in \Ical^2$ by the above argument. (Then $\delta_f$ is simply the reduction of $d_f$ modulo $\Ical^3$). Now $H^{\otimes k}$ is spanned by monomial tensors which lift in $\Ical^k$ to elements $y=(\g_1-1)\cdots (\g_k-1)$ with $\g_1,\ldots, \g_k\in \pi$. Apply $f$ to $y$ and get
    \begin{align}
        f*y&=(f*(\g_1-1))\cdots(f*(\g_k-1))\\
        &(\g_1 -1+d_f(\g_1-1))\cdots (\g_k-1+d_f(\g_1-1)
    \end{align}
    where in each of the $k$ brackets the first summand is in $\Ical$ and the second in $\Ical^2$; then modulo $\Ical^{k+2}$, the latter expression reduces to $y+\sum_{i=1}^k(\g_1-1)\cdots (\g_{i-1}-1)d_f(\g_i-1)(\g_{i+1}-1)\cdots(\g_k-1)$. This gives the result.
\end{proof}
In light of this, we shall onwards drop the $k$ from the superscript of $\delta_f$.

\subsection{The Torelli element} The only element $\f\in\Tcal_{g,1}$ we will be interested in is a boundary pair twist of genus $1$ whose $\delta_\f$ is computed in \cite{JohnsonAnAbelianQuotient} by Johnson (the reader may forget the geometric interpretation from now on). It acts on the basis $a_i,b_i, 1\le i\le g$ via 
    \begin{align*}
        \delta_\f(a_1)&=b_2\wedge a_1,\\
        \delta_\f(b_1)&=b_2\wedge b_1,\\
        \delta_\f(a_2)&=a_1\wedge b_1,
    \end{align*}
    and vanishes on the rest of the generators. Here, again, the wedge product $c_1\wedge c_2$ should be read as the commutator $c_1\otimes c_2-c_2\otimes c_1$. So for a concrete application of Proposition \ref{prop:deltaextendsasderivation} on $b_1\otimes b_3\otimes a_2\in H^{\otimes 3}$, we obtain
    \begin{align*}
        \delta_\f^3(b_1\otimes b_3\otimes a_2)=&b_2\wedge b_1\otimes b_3\otimes a_2+b_1\otimes b_3\otimes a_1\wedge b_1\\
        =&b_2\otimes b_1\otimes b_3\otimes a_2-b_1\otimes b_2\otimes b_3\otimes a_2 \\
        &+ b_1\otimes b_3\otimes a_1\otimes b_1-b_1\otimes b_3\otimes b_1\otimes a_1\in H^{\otimes 4}.
    \end{align*}
    To aleviate notation, from now on we will omit the tensor wheels and remember that the variables $a_1,b_1,\ldots, a_g,b_g$ do not commute.

\subsection{Types of monomials and the action of $\delta_\f$}\label{subsec:types} We henceforth impose the condition $2l+1\le 2g$.

In light of the previous discussion, an element $x\in H^{\otimes k}$ in the image of map \eqref{eq:invariantinjection} satisfies $\delta_f(x)=0$ for all $f\in \Tcal_{g,1}$. Then, to prove Lemma \ref{lem:main}, 
it suffices to find $f\in \Tcal_{g,1}$ which acts non-trivially on lifts of $[H^{\otimes 2l}]^{\Sp_{2g}(\ZZ)}$ except on multiples of $\omega^{\otimes l}$. We will use $f=\f$. 

Under the assumption $2g\ge 2l+1$, the $\omega_C$, with $C\in\Ccal_{2l}$, form a basis of $[H^{\otimes 2l}]^{\Sp_{2g}(\ZZ)}$. What we will need to do is distinguish the images $\delta_f(\omega_C)$ between different $C$. 

Now, to begin with each $\omega_C$ is a sum of monomials that, up to permuting the factors, are of the form $a_{i_1}b_{i_1}\cdots a_{i_l}b_{i_l}$ for $1\le i_1,\ldots, i_l\le g$. By Proposition \ref{prop:deltaextendsasderivation}, if we forget the non-commutativity of the variables for a moment, $\delta_\f$ produces, out of a monomial of the above type, new monomials which replace exactly one of $a_1$, $b_1$, $a_2$ by $b_2a_1$, $b_2b_1$, $a_1b_1$, respectively. (Notice that in each case the number of $a$-generators is preserved and the number of $b$-generators increases by $1$.) Our strategy is to find one of these new monomial which (i) survives with non-vanishing coefficient in $\delta_\f(\omega_C)$, and (ii) appears in other $\delta_\f(\omega_{C'})$ for $C'\neq C$. We shall make this concrete.

Consider monomials in the set of non-commutative variables $$\Bcal=\{a_1,...,a_g,b_1,...,b_g\}.$$
\begin{definition}[Types]  
     We will say that two monomials are \textit{of the same type} if they agree after allowing the variables to commute. This defines an equivalence relation on the set of monomials, and a \textit{type} is an equivalence class of this relation.  
\end{definition}
We declare the family of types $AB_{2l}$ to contain all monomials  of type $$a_{i_1}b_{i_1}a_{i_2}b_{i_2}...a_{i_l}b_{i_l}$$ where $i_1,...,i_l\in \{1,...,g\}$. Then each $\omega_C$ for $C\in\Ccal_{2l}$ is of type $AB_{2l}$. We further introduce the types 
\begin{align*}
    X_1: \hspace{6pt} &\underline{a_1b_1}a_3b_3...a_{l+1}b_{l+1},\\
    X_2: \hspace{6pt}  &\underline{a_2b_2}a_3b_3...a_{l+1}b_{l+1},\\
    Y: \hspace{6pt} &\underline{a_1b_1b_2}a_3b_3...a_{l+1}b_{l+1},
\end{align*}
(where we have underlined the distinguishing factors) 
and we will call type $X$ the union of types $X_1$ and $X_2$. The motivation is that for a monomial $\mfrak$ of type $X$, which is in the family $AB_{2l}$, $\delta_\f(\mfrak)$  is of type $Y$.  The next technical lemma summarises the more specific way $\delta_\f$ operates on monomials of type $X$.

\begin{lemma}\label{lem:Tfm}
\begin{enumerate}
    \item If $\mfrak$ is a monomial of type $X_1$, resp. $X_2$, then $\delta_\f(\mfrak)$ is a linear combination of four, resp. two, distinct monomials of type $Y$ with coefficients $\pm 1$.
    \item Let $\mfrak_i$ and $\mfrak'_j$ be monomials of type $X_i$ and $X_j$ respectively, where $i,j\in \{1,2\}$. Then $\delta_\f(\mfrak_i)$ and $\delta_\f(\mfrak'_j)$ share a common monomial summand if and only if either (a)  $i=j$ and $\mfrak_i=\mfrak'_j$, or (b) $i\neq j$ and there exist monomials $\vfrak,\wfrak$ in the set of generators $\Bcal-\{a_1,b_1,a_2,b_2\}$ such that $$\mfrak_i=\vfrak x_iy_i\wfrak$$ and $$\mfrak'_j=\vfrak x'_jy_j'\wfrak,$$
    where $\{x_i,y_i\}=\{a_i,b_i\}$ and $\{x'_j,y'_j\}=\{a_j,b_j\}$.
\end{enumerate}
\end{lemma}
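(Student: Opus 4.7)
The plan is to extract both parts from a direct calculation of $\delta_\f$ on a general monomial, followed by a combinatorial analysis of how the resulting type-$Y$ monomials can coincide across different source monomials.

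For part (1), I would first note that a monomial $\mfrak$ of type $X_1$ contains exactly one $a_1$ and one $b_1$ and no $a_2$ or $b_2$, while a monomial of type $X_2$ contains exactly one $a_2$ and one $b_2$ and no $a_1$ or $b_1$. Since $\delta_\f$ vanishes on every generator other than $a_1, b_1, a_2$, applying Proposition~\ref{prop:deltaextendsasderivation} reduces the computation to summing contributions at the $a_1$ and $b_1$ positions for type $X_1$, producing four signed terms, and at the single $a_2$ position for type $X_2$, producing two signed terms. Each resulting monomial has the variable content of type $Y$ by inspection. For distinctness, the four positions at which $b_2$ is inserted (immediately before or after $a_1$ and immediately before or after $b_1$) are distinct generically; when $a_1, b_1$ are adjacent in $\mfrak$ the two middle insertion positions coincide with opposite signs and cancel, still leaving a signed $\pm 1$-combination of distinct type-$Y$ monomials.

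For part (2), the key construction is a \emph{recovery rule} that reads $\mfrak_i$ off a shared summand $\Phi$ of type $Y$: if $i=1$, delete the unique $b_2$ from $\Phi$; if $i=2$, locate the unique adjacent $\{a_1,b_1\}$-block in $\Phi$ and collapse it into a single $a_2$. Each rule is well-defined by the uniqueness of $b_2$ (respectively of the $\{a_1,b_1\}$-block when it exists) in a type-$Y$ monomial. If $i = j$, applying the same rule to $\Phi$ gives $\mfrak_i = \mfrak'_j$, which is case (a). If $i \neq j$, say $i=1, j=2$, then Rule~2 requires $\Phi$ to have $a_1, b_1$ adjacent; but $\Phi$ arises from $\mfrak_1$ by inserting a single $b_2$, and insertion cannot create new adjacencies, so $\mfrak_1$ already has $a_1, b_1$ adjacent and the $b_2$ must lie immediately before or immediately after that $\{a_1, b_1\}$-block. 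Writing the resulting decomposition $\mfrak_1 = \vfrak x_1 y_1 \wfrak$ with $\vfrak,\wfrak$ in $\Bcal - \{a_1,b_1,a_2,b_2\}$ and then applying Rule~2 to $\Phi$ yields $\mfrak'_2 = \vfrak x'_2 y'_2 \wfrak$ with $\{x'_2, y'_2\}=\{a_2, b_2\}$, which is exactly condition (b). The converse direction of the biconditional -- that any pair satisfying (a) or (b) genuinely share a summand -- I would verify by a short direct computation using the explicit formulas of part (1).

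The main obstacle I anticipate is the careful bookkeeping of signs in part (1) when $a_1$ and $b_1$ are adjacent in a type-$X_1$ monomial: the four formal summands collapse to two with surviving $\pm 1$ coefficients, and these surviving monomials are precisely the ones that interact with type-$X_2$ contributions in case (b) of part (2). Tracking the surviving signs cleanly is essential for matching both sides of the biconditional. Beyond this sign-tracking, the argument is essentially combinatorial, resting on the fact that both the $b_2$-insertion position and the $\{a_1,b_1\}$-block collapse are uniquely determined by $\Phi$, from which the lemma follows.
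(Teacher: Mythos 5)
Your argument is correct, and for part (2) it takes a genuinely different route from the paper. The paper proves part (1) by the same derivation computation you describe, after splitting each type $X_i$ into subtypes according to which of $a_i,b_i$ occurs first, and then proves part (2) by an exhaustive case analysis (same subtype, same type but different subtype, different types), matching summands of \eqref{eq:deltafA1}--\eqref{eq:deltafA2} by the order of the letters $a_1,b_1,b_2$ and recording the unique shared monomial in Table~\ref{tab:commonmonomial}. Your recovery rules --- delete the unique $b_2$ to invert a type-$X_1$ source, collapse the unique adjacent $\{a_1,b_1\}$-block to $a_2$ to invert a type-$X_2$ source --- replace that case analysis by an injectivity statement and make the ``only if'' direction essentially immediate; the price is that the ``if'' direction still needs the explicit formulas, and there one must check that the shared monomials of Table~\ref{tab:commonmonomial} are not among any cancelling pair (they are not, since $b_2$ sits outside the $a_1,b_1$-block), a point you correctly flag. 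Note also that your degenerate-case observation is right: for $\mfrak=\vfrak a_1b_1\wfrak$ the two summands with $b_2$ between $a_1$ and $b_1$ cancel, leaving only $\vfrak b_2a_1b_1\wfrak$ and $\vfrak a_1b_1b_2\wfrak$ with opposite signs, so the count ``four distinct monomials'' in part (1) --- and the distinctness claim in the paper's own proof --- holds only when $\ufrak$ is nonempty. This is harmless downstream, since Proposition~\ref{prop:nocancellations} uses only part (2) and Proposition~\ref{prop:nontriviality} applies part (1) to a monomial coming from a non-consecutive chord, where $a_1$ and $b_1$ occupy non-adjacent slots; but your amended formulation (at least two, generically four, distinct type-$Y$ monomials with coefficients $\pm1$) is the literally correct one.
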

\begin{proof}
    For each of $i=1,2$, we split the type $X_i$ of monomials into two ``subtypes'' $A_i$ and $B_i$ depending on which of their unique $a_i$ and $b_i$ factors comes first. Specifically, a monomial of each of these subtypes can be written as
    \begin{align*}
        A_1: \hspace{6pt} &\mfrak_1=\vfrak a_1\ufrak b_1\wfrak, \\
        B_1: \hspace{6pt} &\mfrak_1=\vfrak b_1\ufrak a_1\wfrak,  \\
        A_2: \hspace{6pt} &\mfrak_2=\vfrak a_2\ufrak b_2\wfrak,  \\
        B_2: \hspace{6pt} &\mfrak_2=\vfrak b_2\ufrak a_2\wfrak 
    \end{align*}
    with $\ufrak ,\vfrak,\wfrak$ monomials in $\Bcal-\{a_1,b_1,a_2,b_2\}$. Using that $\delta_\f$ is a derivation (Proposition \ref{prop:deltaextendsasderivation}), we evaluate
    \begin{align}
        A_1: \hspace{6pt} \delta_\f(\mfrak_1)=&\vfrak (a_1\wedge b_2)\ufrak b_1\wfrak +\vfrak a_1\ufrak (b_1\wedge b_2)\wfrak, \label{eq:deltafA1} \\
        B_1: \hspace{6pt} \delta_\f(\mfrak_1)=&\vfrak (b_1\wedge b_2)\ufrak a_1\wfrak +\vfrak b_1\ufrak (a_1\wedge b_2)\wfrak, \label{eq:deltafB1}\\
        A_2: \hspace{6pt} \delta_\f(\mfrak_2)=&\vfrak (a_1\wedge b_1)\ufrak b_2\wfrak, \label{eq:deltafA2}\\
        B_2: \hspace{6pt} \delta_\f(\mfrak_2)=&\vfrak b_2\ufrak (a_1\wedge b_1)\wfrak. 
    \end{align}
    Recalling that $a\wedge b=ab-ba$, the first two lines produce $4$ monomial summands each with sign $\pm 1$, while the last two lines $2$ each. It is clear by looking at the order of the factors $a_1,b_1,a_2,b_2$, that the monomials in each line are distinct. This proves assertion (1).

    In sequel, we study under what conditions $\delta_\f(\mfrak_i)$ and $\delta_\f(\mfrak'_i)$ share common monomial summands. We distinguish cases depending on whether $\mfrak_i$ and $\mfrak_i'$ are of the same type and/or subtype.

    \emph{Case 1: $\mfrak_i$ and $\mfrak'_i$ of same subtype.} We assume this subtype is $A_1$; the same argument applies to the other three subtypes. Suppose $\delta_\f(\mfrak_1)$, $\delta_\f(\mfrak'_1)$ share a common monomial summand, and write $\mfrak_1=\vfrak a_1\ufrak b_1\wfrak$ and $\mfrak'_1=\vfrak'a_1\ufrak'b_1\wfrak'$. Now the four monomial summands from \eqref{eq:deltafA1} are uniquely distinguished by the order of appearance of $a_1,b_1,b_2$. So if, say, the summand $\vfrak a_1b_2\ufrak b_1\wfrak$ of $\delta_\f(\mfrak_1)$ appears as a summand of $\delta_\f(\mfrak'_1)$, then this is the summand $\vfrak'a_1b_2\ufrak'b_1\wfrak'$. By the running assumption that $\vfrak ,\vfrak',\ufrak ,\ufrak',\wfrak,\wfrak'$ have no $a_1,b_1,a_2,b_2$ as factor, the equality $\vfrak a_1b_2\ufrak b_1\wfrak=\vfrak'a_1b_2\ufrak'b_1\wfrak'$ implies $\vfrak =\vfrak'$, $\ufrak =\ufrak'$ and $\wfrak=\wfrak'$, yielding $\mfrak_1=\mfrak'_1$. The same conclusion follows if we start from any of the other three summands of $\delta_\f(\mfrak_1)$.

    \emph{Case 2: $\mfrak_i$ and $\mfrak'_i$ of same type but different subtype.} Assume $i=1$, and $\mfrak_1,\mfrak_1'$ are of subtypes $A_1,B_1$, respectively. Then, all four monomial summands of $\delta_\f(\mfrak_1)$ from \eqref{eq:deltafA1} have the factor $a_1$ appearing before $b_1$, whereas all monomials of  $\delta_\f(\mfrak_1')$ from \eqref{eq:deltafB1} have $b_1$ before $a_1$. We then find no common monomials. The case $i=2$ is analogous but simpler.

    \emph{Case 3: $\mfrak_i$ and $\mfrak'_i$ of different type.}
    We assume $\mfrak_1=\vfrak a_1\ufrak b_2\wfrak$ is of type $A_1$ and $\mfrak'_2=\vfrak a_2\ufrak b_2\wfrak$ is of type $A_2$. Then an inspection of \eqref{eq:deltafA1} and \eqref{eq:deltafA2} finds only one pair of monomials in $\delta_\f(\mfrak_1)$ and $\delta_\f(\mfrak'_2)$ having the factors  $a_1,b_2,b_1$ in the same order. These are, respectively,
    \begin{equation}\label{eq:commonmonomials}
        \vfrak a_1\ufrak b_1b_2\wfrak \text{ and } \vfrak 'a_1b_1\ufrak'b_2\wfrak'
    \end{equation}
    and they are equal if and only if $\vfrak =\vfrak'$, $\wfrak=\wfrak'$ and $\ufrak $ and $\ufrak'$ are both the empty monomial. In other words, under the current assumption, $\delta_\f(\mfrak_1)$ and $\delta_\f(\mfrak'_2)$ share a common monomial if and only $\mfrak_1=\vfrak a_1b_1\wfrak$ and $\mfrak_2=\vfrak a_2b_2\wfrak$, as claimed. The other three cases are done similarly, with the common monomial in place of \eqref{eq:commonmonomials}  shown in Table \ref{tab:commonmonomial}. 
\end{proof}

    \begin{table}
        \centering
        \begin{tabular}{c|c|c}
          & $A_1$ & $B_1$ \\ \hline
        $A_2$ & $\vfrak a_1b_1b_2\wfrak$& $\vfrak b_1a_1b_2\wfrak$\\ \hline
        $B_2$ & $\vfrak b_2a_1b_1\wfrak$ & $\vfrak b_2b_1a_1\wfrak$ \\ 
    \end{tabular}
        \caption{The unique common monomial between $\delta_\f(\mfrak_1)$ and $\delta_\f(\mfrak'_2)$ depending on their subtypes.}
        \label{tab:commonmonomial}
    \end{table}

\subsection{Action on chord-diagrams}
We now analyse the action of $\delta_\f$ on the $\omega_C$ for $C\in \Ccal_{2l}$. In expressing $\omega_C$ as a polynomial in $\Bcal$, we find that it always contains monomials of type $X_j$ for each $j=1,2$: for example, the monomial where $a_j,b_j$ are placed in the tensor slots $p_1,q_1$, respectively, and, for $i\ge 2$, the factors $a_{i+1},b_{i+1}$ are placed in the slots $p_i,q_i$, respectively. Conversely, if we are given a monomial of type $X_j$, with $j=1,2$, there is a unique $\omega_C$ which contains it as a summand: simply ``join with a chord'' each pair of tensor slots $\{p_i,q_i\}$ where appear an $a$ factor and a $b$ factor with the same index; this uniquely defines a chord diagram. With this idea, we can prove
\begin{proposition}[No cancellations]\label{prop:nocancellations}
    Let $C,C'\in \Ccal_{2l}$, and $\mfrak, \mfrak'$ be type $X$ monomial summands of $\omega_C, \omega_{C'}$, respectively. Then $\delta_\f(\mfrak)$ and $\delta_\f(\mfrak')$ share a common monomial summand (of type $Y$) only if $C=C'$.
\end{proposition}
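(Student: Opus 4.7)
The plan is to combine Lemma \ref{lem:Tfm}(2) with the reconstruction principle sketched in the paragraph preceding the proposition, namely that a monomial of type $X$ uniquely determines the chord diagram $C\in \Ccal_{2l}$ for which it is a summand of $\omega_C$. Concretely, since every index appearing in a type $X$ monomial does so exactly once as an ``$a$'' and once as a ``$b$'', one recovers $C$ by joining in a chord, for each such index $j$, the pair of positions occupied by $\{a_j,b_j\}$, and then listing the resulting pairs in lexicographic order of their smaller coordinate. I would open the proof by stating this observation explicitly, as it is what converts a monomial-level conclusion into a chord-diagram-level one.

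Next, I would split according to the two subcases of Lemma \ref{lem:Tfm}(2). In subcase (a), $\mfrak$ and $\mfrak'$ are of the same subtype and $\mfrak=\mfrak'$, so $\mfrak$ is a summand of both $\omega_C$ and $\omega_{C'}$, and the reconstruction immediately forces $C=C'$. In subcase (b), $\mfrak=\vfrak x_1y_1\wfrak$ is of subtype $X_1$ and $\mfrak'=\vfrak x'_2y'_2\wfrak$ is of subtype $X_2$ (or the roles of $1$ and $2$ are swapped), with $\vfrak,\wfrak$ containing none of $a_1,b_1,a_2,b_2$. Applying the reconstruction, both $C$ and $C'$ acquire the chord joining the two adjacent positions that carry the $\{a_1,b_1\}$ pair in $\mfrak$ and the $\{a_2,b_2\}$ pair in $\mfrak'$; all remaining chords are determined by the positions of the factors of $\vfrak$ and $\wfrak$, which are identical in both monomials. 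Hence $C=C'$ in this subcase as well.

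The main point requiring care is the reconstruction principle itself, but this follows directly from the distinctness of indices in the families $X_1$ and $X_2$ and involves no new ideas. Once it is in place, the proposition reduces to a clean case analysis of Lemma \ref{lem:Tfm}(2), and I do not anticipate any further combinatorial obstacle.
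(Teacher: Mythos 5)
Your proposal is correct and follows essentially the same route as the paper: invoke Lemma \ref{lem:Tfm}(2) to constrain the pair $(\mfrak,\mfrak')$, then use the reconstruction recipe (joining the two slots carrying $a_j,b_j$ with the same index, which is unambiguous because all indices in a type $X$ monomial are distinct) to conclude that both monomials read off the same chord diagram, hence $C=C'$. Making the reconstruction principle explicit, as you do, is exactly the content of the paragraph preceding the proposition, so no gap remains.
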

\begin{proof}
    By Lemma \ref{lem:Tfm}, if $\delta_\f(\mfrak)$ and $\delta_\f(\mfrak')$ share a polynomial summand then either $\mfrak=\mfrak'$ or $\mfrak=vx_iy_iw$ and $\mfrak'=vx'_{i'}y'_{i'}w,$
    where $\{x_i,y_i\}=\{a_i,b_i\}$ and $\{x'_{i'},y'_{i'}\}=\{a_{i'},b_{i'}\}$. In either case, the recipe described above reads off the same chord diagram, so $C=C'$.
\end{proof}

\begin{proposition}[Non-triviality]\label{prop:nontriviality}
    If $C\neq C_0$, then the expression of $\delta_\f(\omega_C)$ as a polynomial in the set $\Bcal$ has monomials of type $Y$ with non-zero coeffiecients. If $C=C_0$, then $\delta_\f(\omega_{C_0})$ vanishes in $H^{\otimes 2k+1}$.
\end{proposition}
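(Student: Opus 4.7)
The plan is to split the proof into the cases $C = C_0$ and $C \neq C_0$ and handle them separately.

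For $C = C_0$, where $\omega_{C_0} = \omega^{\otimes l}$, I first verify that $\delta_\f(\omega) = 0$ in $H^{\otimes 3}$. Since $\delta_\f$ annihilates every basis element other than $a_1, b_1, a_2$, only the summands $a_1 \wedge b_1$ and $a_2 \wedge b_2$ of $\omega$ contribute, and a short direct expansion using Proposition \ref{prop:deltaextendsasderivation} shows the eight resulting tensor monomials cancel pairwise. A quicker alternative is to invoke Example \ref{ex:zetaISomega}: the lift $(\zeta - 1)^l + \Ical^{2l+2}$ of $\omega^{\otimes l}$ is $\Gamma_{g,1}$-invariant and in particular $\f$-fixed, hence $\delta_\f$ vanishes on it by construction. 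Either way, once $\delta_\f(\omega) = 0$ is in hand, the derivation property gives $\delta_\f(\omega^{\otimes l}) = 0$ immediately.

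For $C \neq C_0$, the plan is to exhibit one explicit type $Y$ monomial surviving in $\delta_\f(\omega_C)$ with nonzero coefficient. Since $C_0$ is the only chord diagram all of whose chords have length one, there is a chord $(p_r, q_r)$ of $C$ with $q_r \geq p_r + 2$; fix such an $r$. I then build a specific type $X_2$ monomial summand $\mfrak$ of $\omega_C$: assign index $2$ to the chord $(p_r, q_r)$, placing $a_2$ at slot $p_r$ and $b_2$ at slot $q_r$, and distribute the indices $\{3, 4, \ldots, l+1\}$ bijectively among the remaining $l-1$ chords, each chord with $a_i$ at its $p$-slot and $b_i$ at its $q$-slot. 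The hypothesis $2l + 1 \leq 2g$ gives $l + 1 \leq g$, so enough indices are available. In the subtype-$A_2$ notation from the proof of Lemma \ref{lem:Tfm}, $\mfrak = \vfrak a_2 \ufrak b_2 \wfrak$ with $\vfrak, \ufrak, \wfrak$ monomials in $\Bcal - \{a_1, b_1, a_2, b_2\}$, and $\ufrak$ is nonempty precisely because $q_r > p_r + 1$. Equation \eqref{eq:deltafA2} then shows that the type $Y$ monomial $\nfrak := \vfrak a_1 b_1 \ufrak b_2 \wfrak$ appears in $\delta_\f(\mfrak)$ with coefficient $+1$; moreover $\mfrak$ appears in $\omega_C$ with sign $+1$ since each chord was oriented so that $a_i$ precedes $b_i$.

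The main task, and the principal obstacle, is ruling out cancellations. A quick multiset check confirms that only monomial summands of $\omega_C$ of type $X_1$ or $X_2$ can produce a type $Y$ monomial under $\delta_\f$, so these are the only possible contributors to the coefficient of $\nfrak$ in $\delta_\f(\omega_C)$. By Lemma \ref{lem:Tfm}(2), any type $X$ monomial $\mfrak' \neq \mfrak$ in $\omega_C$ with $\nfrak \in \delta_\f(\mfrak')$ must be of type $X_1$ and pair with $\mfrak$ via case (2b); but (2b) requires the pair $\{a_2, b_2\}$ in $\mfrak$ to occupy adjacent slots, i.e., $\ufrak$ empty, which the choice of $r$ excludes. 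Hence the coefficient of $\nfrak$ in $\delta_\f(\omega_C)$ equals $+1 \neq 0$, and $\delta_\f(\omega_C)$ has a nontrivial type $Y$ summand. The whole argument hinges on translating the condition $C \neq C_0$ into the combinatorial statement that $\ufrak$ is nonempty, which is exactly what blocks the case-(2b) pairings permitted by the lemma.
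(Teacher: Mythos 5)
Your proof is correct and follows essentially the same route as the paper: handle $C=C_0$ via $\delta_\f(\omega)=0$ and the derivation property, and for $C\neq C_0$ place the distinguished pair on a non-consecutive chord and invoke Lemma \ref{lem:Tfm}(2) to rule out cancellations. The only (harmless) differences are cosmetic: you use a type $X_2$ monomial and track a single type $Y$ summand with its sign, where the paper uses a type $X_1$ monomial and keeps all its $Y$ summands.
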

\begin{proof}
    Assuming $C\neq C_0$, then $C$ has a non-consecutive chord, i.e. a chord $(p_i,q_i)$ with $q_i\neq p_i+1$. Then $\omega_C$ has a monomial summand 
    $\mfrak$ of type $X_1$ that has $a_1,b_1$ in the tensor slots $p_i,q_i$, respectively. By part (2) of Lemma \ref{lem:Tfm}, there is no other monomial $\mfrak'$ in $\omega_C$ so that $\delta_\f(\mfrak')$ and $\delta_\f(\mfrak)$ share common monomials. Thus the type $Y$ summands of $\delta_\f(\mfrak)$ all survive intact in the summation $\delta_\f(\omega_C)$ with coefficients $\pm 1$. If on the other hand $C=C_0$, then $\omega_{C_0}=\omega^{\otimes k}$. An application of Proposition \ref{prop:deltaextendsasderivation} shows $\delta_\f(\omega)=0$ (in fact this is true for all Torelli elements), and an extension of the derivation rule concludes $\delta_\f(\omega^{\otimes k})=0$.
\end{proof}

The last two propositions combine to yield  
\begin{proposition}\label{thm:LI}
    If $2l+1\le 2g$, then the set $$\{\delta_\f(\omega_C): C\in \Ccalk-\{C_0\}\}$$
    is linearly independent in $H^{\otimes 2l+1}$ and $\delta_\f(\omega_{C_0})=0$.
\end{proposition}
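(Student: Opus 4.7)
The plan is to combine the previous two propositions and extract, for each chord diagram $C \in \Ccal_{2l} \setminus \{C_0\}$, a type $Y$ monomial witness that is contributed only by $\delta_\f(\omega_C)$. The vanishing $\delta_\f(\omega_{C_0}) = 0$ is already part of Proposition \ref{prop:nontriviality}, so only the linear independence assertion needs to be addressed.

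Suppose a relation $\sum_{C \in \Ccal_{2l}\setminus\{C_0\}} \lambda_C \delta_\f(\omega_C) = 0$ holds in $H^{\otimes 2l+1}$. First I would record the observation that, for any $C' \in \Ccal_{2l}$, the type $Y$ monomials appearing in $\delta_\f(\omega_{C'})$ come solely from the type $X$ monomial summands of $\omega_{C'}$: by the derivation rule of Proposition \ref{prop:deltaextendsasderivation}, $\delta_\f$ enlarges a single factor from $\{a_1, b_1, a_2\}$, and tracking the resulting factor multisets shows that the only inputs whose images match the type $Y$ signature are those of type $X_1$ or $X_2$. Fixing $C \neq C_0$, Proposition \ref{prop:nontriviality} then supplies a type $Y$ monomial $\mu_C$ appearing with coefficient $\pm 1$ in $\delta_\f(\omega_C)$, arising from a type $X_1$ summand $\mfrak$ of $\omega_C$ supported on a non-consecutive chord. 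For any $C' \neq C$, Proposition \ref{prop:nocancellations} combined with the factor-multiset observation forces the coefficient of $\mu_C$ in $\delta_\f(\omega_{C'})$ to vanish: any potential contributor $\mfrak'$ in $\omega_{C'}$ must be of type $X$, and $\delta_\f(\mfrak')$ cannot share a monomial with $\delta_\f(\mfrak)$ unless $C' = C$. Reading off the coefficient of $\mu_C$ in the hypothesised relation then yields $\pm\lambda_C = 0$, giving $\lambda_C = 0$ for each $C$.

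I do not expect a genuine obstacle. The only subtle point is the opening factor-multiset observation: it is needed so that Proposition \ref{prop:nocancellations} ---~which is stated only for $\delta_\f$-images of type $X$ summands~--- in fact controls the \emph{entire} type $Y$ support of $\delta_\f(\omega_{C'})$, not merely the portion arising from type $X$ summands. Once that is in hand, the rest of the argument is a straightforward bookkeeping of coefficients.
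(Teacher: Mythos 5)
Your proof is correct and takes essentially the same route as the paper, which obtains the proposition by exactly this combination of Propositions \ref{prop:nocancellations} and \ref{prop:nontriviality}: pick a type $Y$ witness monomial for each $C\neq C_0$ and extract its coefficient from the hypothesised relation. Your opening factor-multiset observation --- that only type $X$ summands of $\omega_{C'}$ can produce type $Y$ monomials under $\delta_\f$ --- is precisely the point the paper leaves implicit, and you are right that it is needed so that Proposition \ref{prop:nocancellations} controls the full type $Y$ support of $\delta_\f(\omega_{C'})$.
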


We can now conclude our proof.
\begin{proof}[Proof of Lemma \ref{lem:main}]
    If $v\in [\Ical^{2l}/\Ical^{2l+2}]^{\Gamma_{g,1}}$, then its image $[v]\in [H^{\otimes 2l}]^{\Sp_{2g}(\ZZ)}$ is, by Theorem \ref{thm:chorddiagramsinvariants}, a linear combination  $[v]=\sum_{c\in \Ccalk}\alpha_C\omega_C$ where $\alpha_C\in \ZZ$. By the definition of $\delta_\phi$ and the invariance of $v$, we must have $\delta_\f([v])=0$, and so $$\sum_{C\in \Ccal_{2l}}\alpha_C\delta_\f(\omega_C)=0.$$ From the vanishing of $\delta_\f(\omega_{C_0})$ and the linear independence of the rest of the $\delta_\f(\omega_C)$ (Proposition \ref{thm:LI}) it follows that $\alpha_C=0$ for $C\neq C_0$ and so $[v]=\alpha \omega^{\otimes l}$ for some $\alpha\in \ZZ$. From Example \ref{ex:zetaISomega}, $v=(\zeta-1)^l+\Ical^{2l+2}$ maps to $\omega^{\otimes l}$, so the image of the injective map \eqref{eq:invariantinjection} is spanned by $\omega^{\otimes l}$ and the domain by $(\zeta-1)^l+\Ical^{2l+2}$.
\end{proof}

\begin{remark}\label{rem:unstablerange}
We used the assumption $2l+1\le 2g$ to have enough generators $a_i,b_i$ to be able to detect from a monomial in $\delta_\f(\omega_C)$ the chord diagram $C$. A more economical argument could emerge by (i) using a basis of $[H^{\otimes 2l}]^{\Sp_{2g}(\ZZ)}$ for smaller $g$ given as a subset of the chord provided by Mihailovs \cite{mihailovs1998symplectictensorinvariantswave}, and (ii) by using the action of more Torelli elements.
\end{remark}

\bibliographystyle{amsalpha}

\bibliography{biblio.bib}
\end{document}